\theoremstyle{plain}
\newtheorem{theorem}{Theorem}
\newtheorem{corollary}{Corollary}
\newtheorem*{corollary*}{Corollary}
\theoremstyle{definition}
\theoremstyle{remark}
\newtheorem{remark}{Remark}
\newtheorem*{remark*}{Remark}
\begin{document}
\title[On fractal phenomena connected with infinite linear IFS]
{On new fractal phenomena\\ connected with infinite linear IFS}
\author[S. Albeverio, Yu. Kondratiev, R. Nikiforov, G. Torbin  ]{Sergio Albeverio$^{1,2,3,4}$, Yuri Kondratiev $^{6,7,9}$ \\ Roman Nikiforov$^{8}$, Grygoriy Torbin$^{9,10}$}


\begin{abstract}
We establish several new fractal and number theoretical phenomena  connected with  expansions which
are generated by infinite linear iterated function systems.
First of all we show that the systems $\Phi$ of cylinders of  generalized L\"uroth expansions are, generally speaking, not faithful for the Hausdorff dimension calculation. Using Yuval Peres' approach, we prove sufficient conditions for the non-faithfulness of such families of cylinders.
 On the other hand, rather general sufficient conditions for the faithfulness of such covering systems are also found. As a corollary of  our main results, we obtain the non-faithfullness of the family of cylinders generated by the classical  L\"uroth expansion.

 Possible infinite entropy of the stochastic vector $Q_\infty$ which determines the metric relations for partitions of  the generalized L\"uroth expansions,  possible non-faithfulness of the family $\Phi(Q_\infty)$ and the absence of general formulae for the calculation of the Hausdorff dimension even for probability measures with independent identically distributed  symbols of generalized L\"uroth expansions are all facts that do not allow  to apply usual probabilistic methods, as well as  methods of dynamical systems to study fractal properties of corresponding subsets of non-normal numbers. Since the `divergent points technique' is not developed for measures generated by infinite IFS, the corresponding methods are also not applicable to solve the problem of the above fractal properties.
  Despite of the above mentioned difficulties, we develop new approach to the study of subsets of $Q_\infty$-essentially non-normal numbers and prove (without any additional restrictions on the stochastic vector $Q_{\infty}$)   that this set is superfractal. This result answers the open problem mentioned in \cite{AKNT2}  and completes the metric, dimensional and topological classification of real numbers via the asymptotic behaviour of frequencies their digits in the generalized L\"uroth expansion.

%

\end{abstract}

\maketitle

$^1$~Institut f\"ur Angewandte Mathematik, Universit\"at Bonn,
Endenicher Allee 60, D-53115 Bonn (Germany); $^2$HCM and ~IZKS, Bonn; $^3$ BiBoS,
Bielefeld--Bonn; $^4$~CERFIM, Locarno; E-mail:
albeverio@iam.uni-bonn.de

$^6$ Fakult\"{a}t  f\"{u}r Mathematik, Universit\"{a}t Bielefeld, Postfach 10 01 31, D-33501,  Bielefeld (Germany) $^7$ BiBoS,
Bielefeld--Bonn; E-mail: kondrat@mathematik.uni-bielefeld.de

$^{8}$~National Pedagogical University, Pyrogova str. 9, 01030 Kyiv
(Ukraine); E-mail: rnikiforov@gmail.com

$^9$~National Pedagogical University, Pyrogova str. 9, 01030 Kyiv
(Ukraine) $^{10}$~Institute for Mathematics of NASU,
Tereshchenkivs'ka str. 3, 01601 Kyiv (Ukraine); E-mail:
torbin@iam.uni-bonn.de (corresponding author)
\medskip

\textbf{AMS Subject Classifications (2010): 11K55, 28A80,
60G30.}\medskip

\textbf{Key words: } infinite IFS, L\"uroth expansion, $Q_\infty$-expansion,  Hausdorff dimension,  faithful and non-faithful nets,
fractals,  singular probability measures, non-normal numbers.

\section{Introduction}

There is a rich diversity of systems of numerations for real numbers (see, e.\,g., \cite{DajaniKraaikamp, Schweiger} and references therein).Any such system generates its own metric, dimensional and probabilistic theories. A very important family of such systems consists of expansions generated by iterated function systems (IFS) and admit a dynamical approach to their treatment (\cite{DajaniKraaikamp, Pesin97, Schweiger}). For the case of finite IFS, fractal properties of corresponding attractors and invariant measures were intensively studied during the last 50 years. In particular, it has been shown that methods of the ergodic theory of dynamical systems and statistical physics are of great importance to study fine fractal properties of related sets and measures (see, e.\,g., Bowen \cite{bowen}, Ruelle \cite{ruelle}, Falconer \cite{Fal2} and references therein). An important part of the above mentioned studies is related to the distribution of  frequencies digits in expansions for points in the invariant sets of IFS (see, e.\,g., \cite{APT3, BOS07, BSS02, Besicovitch, Eggleston,  Ols2, Ols1, PT, volkmann59}).

The corresponding theory for infinite IFS is essentially less developed and a lot of new phenomena appear in such a case. It is known, for instance, that even for the infinite \textit{linear} IFS the Hausdorff dimension of the corresponding attractor fails to be the root of the equation $\sum\limits_{i=1}^{\infty} k_i^x = 1 $, where $k_i$ is the similarity ratio of $F_i$ (see, e.\,g., \cite{NT_NZ2008} and references therein).

The most famous expansions with infinite alphabets are the classical continued fractions expansion and the L\"uroth expansion (\cite{DajaniKraaikamp, Schweiger}). Problems  of the dimensional theory of continued fraction expansion are well known (see, e.\,g., \cite{FLM10,Good, Hensley89, Hensley96, Jenkinson, Khi63, luczak}) and can partially be explained by the new phenomenon of fractal non-faithfulness in the dimensional theory of continued fraction expansions which has been discovered recently in \cite{PerTor}. In the present paper we deal with expansions generated by infinite linear IFS $\{F_0, F_1, \dots, F_n, \dots\}$ such that $F_n$ is a similarity transformation with ratio $q_n$, $\sum\limits_{n=0}^{\infty}q_n =1$ and the sequence $\{\sup\limits F_i([0,1])\}$ is strictly monotone. Let
\begin{equation*}\label{delta expansion}
  \Delta_{\alpha_1(x) \alpha_2(x) \dots \alpha_n(x)\dots}
\end{equation*}
be the corresponding expansion of $x\in[0, 1].$  Let us mention that if the above sequence $\{\sup\limits F_i([0,1])\}$ is strictly decreasing and $q_i= \frac{1}{(i+1)(i+2)}$, then we get the classical L\"uroth expansion. For the case of a increasing sequence  we get the  $Q_{\infty}$-expansion. Since metric, dimensional and probabilistic theories are entirely the same for both cases, we will consider only the case where the above sequence increases.
Such an expansion is actually the $f$-expansion  (see, e.\,g., \cite{Everett,Renyi} for details), which is generated by the following strictly increasing continuous  function $f$ defined on $[0, +\infty)$ such that $f(0)=0$ and $f$ increases linearly on each interval $[i, i+1]$ with $f(i+1)-f(i)=q_i, \forall i \in \mathbb{N}_0$.

 One approach to the simplification of the calculation of the Hausdorff dimension consists in some restrictions of admissible coverings. This idea came from Besicovitch's works and has been used by Rogers and Taylor to construct comparable net measures (\cite{R}) as approximations of the Hausdorff measures. In this paper we develop Besicovitch's approach via construction of net coverings which lead to a special  family of net measures which are more general that comparable ones. The first phenomenon we will talk about is connected with the problem of faithfulness and  non-faithfulness of the family of cylinders from the above expansions for the Hausdorff dimension calculation.
To be precise, let us shortly recall that \textit{the $\alpha $-dimensional Hausdorff measure}
of a set $E\subset [0,1]$ with respect to  a given  family of coverings $ \Phi$  is defined by
 $$ H^{\alpha } (E,  \Phi)=  \lim\limits_{\epsilon \to 0 } ~~ \inf\limits_{|E_{j} |\le \epsilon }~ \sum _{j} |E_{j}|^{\alpha }
  = \lim\limits_{\epsilon \to 0 } H_{\epsilon }^{\alpha } (E,\, \Phi ),$$
where the infimum is taken over all at most countable $\epsilon $-coverings $\{ E_{j} \} $ of $E$, $E_{j} \in \Phi$.
  The nonnegative number
\[\dim _{H} (E,\, \Phi)=\inf \{ \alpha :\, \, H^{\alpha } (E,\, \Phi )=0\}\]
\noindent is called the Hausdorff dimension of the set $E\subset [0,1]$ w.r.t. the family $\Phi$.
If $\Phi$ is the family of all subsets of $[0, 1]$, or $\Phi$ coincides with the family of
all closed (open) subintervals of [0,1], then $\dim _{H} (E,\, \Phi)$ is equal to the classical Hausdorff dimension $\dim _{H} (E)$ of the subset $E \subset [0,1]$.

 A fine covering family $\Phi$ is said to be a \textit{faithful family of coverings}~\textit{(non-faithful family of coverings)} for the Hausdorff dimension calculation on $[0,1]$ if $$\dim _{H} (E,\Phi)=\dim _{H} (E), ~~~\forall E\subseteq [0,1]$$ $$(\mbox{resp.} ~~\exists E\subseteq [0,1]: \dim _{H} (E,\Phi)\neq\dim _{H} (E)).$$

It is clear that any family $\Phi$ of  comparable net-coverings (i.\,e., net-coverings which generate comparable net-measures)  is faithful. Conditions for Vitali coverings to be faithful were studied by many authors (see, e.g., \cite{AT2, Bil, Cutler, PT_NZ2003} and references therein). First steps in this direction have been done by A.~Besicovitch (\cite{Besicovitch}), who proved the faithfulness for the family of cylinders of a binary expansion. His result was extended by P.~Billingsley (\cite{Bil}) to the family of $s$-adic cylinders, by M.~Pratsiovytyi (\cite{TuP}) to the family of $Q$-$S$-cylinders, and by S.~Albeverio and G.~Torbin (\cite{AT2}) to  the family of $Q^*$-cylinders for those matrices $Q^*$ whose elements $p_{0k}, p_{(s-1)k}$ are bounded away from zero.

  It is rather paradoxical that  initial  examples of non-faithful families of coverings appeared firstly  in the two-dimensional case (as a result of active studies of self-affine sets during the last decade of XX century (see, e.\,g., \cite{BB})). The family of cylinders of the classical continued fraction expansion can probably be considered as the first (and rather unexpected) example of non-faithful one-dimensional net-family of coverings (\cite{PerTor}).
It is clear that for the family of all cylinders of the $Q_{\infty}$-expansion neither assumptions for a Vitali covering (\cite{Cutler}) nor any other known conditions for the faithfulness do hold.  By using the approach invented by Yuval Peres to prove the non-faithfulness of the family of continued fraction cylinders (\cite{PerTor}), in Section \ref{section:nonfaith} of the present paper we prove the non-faithfulness for the family $\Phi(Q_\infty)$ of cylinders of the $Q_\infty$-expansion with polynomially decreasing elements~$\{q_i\}$ (Theorem 2), which shows that the family of cylinders of the L\"uroth expansion is non-faithful!    On the other hand we give rather general sufficient conditions for $\Phi(Q_\infty)$ to be faithful (Theorem 1). Let us stress that this family of expansions  is the first known one generating faithful as well as non-faithful nets.

\medskip

The second aim of the paper is to study properties of the set of $Q_\infty$-essentially non-normal numbers and obtain a complete metric, topological
 and fractal classification of real numbers via the asymptotic behaviour of frequencies of their $Q_\infty$-digits. To be more precise, let $N_i(x,n)$ be the number of digits  $i$ among the  first $n$  digits of the $Q_\infty$-expansion of $x$. If the limit $\lim\limits_{n\to\infty} \frac{N_i(x,n)}{n}$ exists, then its value $\nu_i(x)$ is said to be \emph{the asymptotic frequency of the digit  `$i$' in the $Q_\infty$-expansion of  $x$}. By the law of large numbers, for Lebesgue almost all real numbers from the unit interval we have $\nu_i (x) = q_i,\: \forall i \in\mathbb{N}_0.$

The set
$
N(Q_{\infty}) = \left\{x : \; \forall i \in \mathbb{N}_0, \,\lim\limits_{n\to\infty}
\frac{N_i(x,n)}{n} = q_i \right\}
$
is said to be the set of  $Q_\infty$-normal numbers.

The set
$
W(Q_{\infty}) = \left\{x : \; \forall i \in \mathbb{N}_0, \, \lim\limits_{n\to\infty}
\frac{N_i(x,n)}{n} \;\text{exists}\right\} \bigcap \overline{N(Q_\infty)}
$
is said to be the set of  $Q_\infty$-quasi-normal numbers, where $\overline{N(Q_\infty)}=[0, 1)\setminus N(Q_\infty).$

The set
$
D(Q_\infty) = \Big\{ x:  \; \exists i_0 \in N_0,  \, \liminf\limits_{n\to\infty} \frac{N_{i_0}(x,n)}{n} < \limsup\limits_{n\to\infty} \frac{N_{i_0}(x,n)}{n}    \Big\}$ is said to be the set of  $Q_\infty$-non-normal numbers.

    The set
$$ \begin{aligned} P(Q_\infty)=\Big\{x:\quad & \exists i_0 \in \mathbb{N}_0,  \, \liminf\limits_{n\to\infty} \frac{N_{i_0}(x,n)}{n} < \limsup\limits_{n\to\infty} \frac{N_{i_0}(x,n)}{n},\\
& \exists i_1 \in \mathbb{N}_0, \,  \liminf\limits_{n\to\infty} \frac{N_{i_1}(x,n)}{n} = \limsup\limits_{n\to\infty} \frac{N_{i_1}(x,n)}{n} \Big\}
\end{aligned}
$$
is said to be the set of  $Q_\infty$-partially non-normal numbers.

    The set
$  L(Q_\infty)=\Big\{x: \; \forall i\in \mathbb{N}_0, \, \liminf\limits_{n\to\infty} \frac{N_{i}(x,n)}{n} < \limsup\limits_{n\to\infty} \frac{N_{i}(x,n)}{n}  \Big\}
$
is said to be the set of  $Q_\infty$-essentially non-normal numbers.

For the case of the $s$-adic expansion ($s>1,~ s \in \mathbb{N}$) and some generalizations for expansions with finite alphabets several approaches have been developed to study properties of the corresponding sets.  In 1995 it has been shown (\cite{PT}) that the set of non-normal numbers w.r.t. base $s$ is superfractal (i.\,e., a set of zero Lebesgue measure and of full Hausdorff dimension). By using different approaches, in \cite{APT3} and in \cite{BSS02} it has been proven that the set of real numbers which are essentially non-normal w.r.t. base $s$ is also superfractal.  Another powerful approach (the so-called `divergence points' approach) was developed in a series of papers by L.~Olsen, S.~Winter, N.~Snigereva, I.~S.~Baek, A.~Bisbas (see, e.\,g., \cite{BOS07, Bisbas12, Ols2, Ols1,  Ols4, Ols3} and references therein). From Volkmann results \cite{volkmann59} it follows that this set is residual, and, therefore, other three subsets $N(Q_\infty), W(Q_\infty), P(Q_\infty)$ are of the first Baire category.

For the case of infinite IFS the situation is essentially more complicated. In \cite{FLMW10} the authors pointed out that ``the difference from earlier works is the (countable) infinity of the alphabet, from which comes a particular phenomenon that the formal variational principle does not hold as in the case of compact dynamics and \dots the thermodynamic formalism does not work as one wishes. In fact, we do not know whether there always exist a Gibbs measure, to be suitable defined, on the set in question, which has the dimension of the set.''  In the same paper \cite{FLMW10} the authors  also generalized results from \cite{BI09} and studied fractal properties of Besicovitch--Eggleston sets, which are subsets of  $W(Q_{\infty})$,  found   explicit formulae for their Hausdorff dimension  and stressed significant differences from those in the $s$-adic expansion.

 Possible infinite entropy of the stochastic vector $Q_\infty$, possible non-faithfulness of the family $\Phi(Q_\infty)$ and the absence of general formulae for the calculation of the Hausdorff dimension even for probability measures with independent \textit{identically distributed}  $Q_\infty$-symbols (this is still an open problem (see, e.\,g., \cite{BI09, NT_NZ2008, NT_TVIMS12})) do not allow us to apply methods from \cite{APT3} to study fractal properties of the above defined sets. Let us also mention that methods from \cite{Ols2, Ols1, Ols3}  are also not applicable to solve the problem because of the absence of `divergent points techniques' for the measures generated by infinite IFS.

Despite the above mentioned difficulties, in the last section of the present work we develop a new approach to the study of subsets of $Q_\infty$-essentially non-normal numbers and prove (without any additional restrictions on the stochastic vector $Q_{\infty}$ like convergence of the series $\sum\limits_{j=0}^\infty\frac{\ln^2 q_j}{2^j}$ (see, e.g., \cite{AKNT2}), finiteness of the entropy or faithfulness of the corresponding family $\Phi(Q_\infty)$)  that this set is superfractal. This result answers the open problem mentioned in \cite{AKNT2}  and completes the metric, dimensional and topological classification of real numbers via the asymptotic behaviour of frequencies of their digits in the generalized L\"uroth expansion:

\vskip 0.3 cm
\begin{tabular}{|c|c|c|c|c|} \hline
& Lebesgue measure& Hausdorff dimension & Baire category\\
\hline
$N(Q_\infty)$&1&1& first \\
\hline
$W(Q_\infty)$&0&1& first \\
\hline
$P(Q_\infty)$&0&1& first \\
\hline
$L(Q_\infty)$&0&1& second \\
\hline
\end{tabular}

\section{On faithful and  non-faithful covering systems generated by $Q_{\infty}$-expansion}\label{section:nonfaith}

In this section we shall demonstrate that fractal non-faithful phenomena for one-dimen\-sional nets can appear even for those nets which  are generated by \textit{linear} infinite IFS. Non-faithfulness for the Hausdorff dimension calculation of the family of cylinders for the classical L\"uroth expansion is a  direct consequence of our results.

For the convenience of the reader we give a geometrical explanation of the above mentioned  $Q_\infty$-expansion.
 Given a $Q_\infty$-vector we consecutively perform decompositions of the semi-interval $[0,1)$ in the following way.

Step 1. We decompose $[0,1)$ (from the left to the right) into the union of semi-intervals $\Delta_{\alpha_1}$, $\alpha_1\in \mathbb{N}_0$ (without common points) of the length $\left|\Delta_{\alpha_1}\right| =q_{\alpha_1}$,
\begin{equation*}
\left[ 0,1 \right) =\bigcup_{\alpha_1=0}^\infty
\Delta_{\alpha_1}.
\end{equation*}
Each interval $\Delta_{\alpha_1}$ is called a 1-rank cylinder (basic interval).

 Step $n \geq 2$. We decompose (from the left to the right) each $(n-1)$-rank cylinder $\Delta_{\alpha_1\dots \alpha_{n-1}}$ into the union of $n$-rank cylinders $\Delta_{\alpha_1\dots \alpha_n}$ (without common points)
\begin{equation*}
\Delta_{\alpha_1\dots \alpha_{n-1}}=\bigcup_{\alpha_n =0}^\infty
\Delta_{\alpha_1\dots \alpha_n},
\end{equation*}
whose lengths
\begin{equation}\label{eq:diameterofQinftycylinder}
\left| \Delta_{\alpha_1\alpha_2\dots \alpha_n}\right|= q_{\alpha_1}\cdot
q_{\alpha_2}\cdots q_{\alpha_n}=\prod_{i=1}^nq_{\alpha_i}
\end{equation}
are related as follows
\begin{equation*}
\left| \Delta_{\alpha_1\dots \alpha_{n-1}0}\right| :\left|
\Delta_{\alpha_1\dots \alpha_{n-1}1}\right| :\cdots:\left| \Delta_{\alpha_1\dots \alpha_{n-1} \alpha_n}\right| :\cdots
=q_{0}:q_{1}:\cdots :q_{\alpha_n }:\cdots.
\end{equation*}

It is clear that  any sequence of indices $\{ \alpha_n \}$
generates the corresponding sequence of  embedded cylinders
\begin{equation*}
\Delta_{\alpha_1}\supset \text{ }\Delta
_{_{\alpha_1\alpha_2}}\supset \cdots  \supset \Delta
_{_{\alpha_1\alpha_2\dots \alpha_n}}\supset \cdots
\end{equation*}
and  there exists a unique point $x\in \left[ 0,1\right)$ belonging to all of them.

Conversely, for any point $x\in \left[ 0,1\right) $ there exists a unique sequence of embedded cylinders $\Delta_{\alpha_1}\supset
\Delta_{\alpha_1\alpha_2}\supset \ldots\supset $
$\Delta_{\alpha_1\alpha_2 \dots \alpha_n}\supset \dots$ containing $x$, i.\,e.,
\begin{equation*}
x=\bigcap_{n=1}^\infty
\Delta_{\alpha_1\dots \alpha_n}=\bigcap_{n=1}^\infty
\Delta_{\alpha_1(x)\dots \alpha_n(x)}=: \Delta_{\alpha_1(x)\dots \alpha_n(x)\ldots};
\end{equation*}
$$x=\Delta_{\alpha_1(x)\dots \alpha_n(x)\ldots}.$$
The  expression is said to be the $Q_{\infty}$-expansion for $x$.  Real numbers which are end-points of n-th rank cylinders are said to be $Q_\infty$-rational, and their $Q_\infty$-expansion contains only finitely many non-zero digits. In the sequel,
$\Phi = \Phi(Q_{\infty})$ will be the family of all possible cylinders of
the $Q_\infty$-partition of the semi-interval $[0,1)$, i.\,e.,
\begin{equation*}
\Phi=
\{ E: E= \Delta_{\alpha_1 \dots \alpha_n}, ~~
 ~ \alpha_i \in\mathbb{N}_0, ~ i= 1,2, \dots,n ; ~~~ ~~~ n \in\mathbb{N}\}.
\end{equation*}

Before we present the new phenomena related to the non-faithfulness of the nets generated by infinite IFS, we give rather general sufficient conditions for nets $\Phi$ generated by the $Q_{\infty}$-expansion to be faithful.
\begin{theorem}
Let $Q_\infty=(q_0, \dots, q_i,\dots)$  be a stochastic vector such that for any $\alpha>0$ there exists a constant $c=c(\alpha)$ with
\begin{equation}\label{eq:condfaithnew}
\sum_{k=i+1}^\infty q_k^\alpha\leq c(\alpha)q_i^\alpha, \;\forall i\in \mathbb{N}_0.
\end{equation}
Then the family $\Phi$ is faithful for the Hausdorff dimension calculation on the unit interval.
\end{theorem}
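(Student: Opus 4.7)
The easy inequality $\dim_H(E,\Phi)\ge\dim_H(E)$ is immediate from the definitions, since restricting the coverings to $\Phi$ only enlarges the infimum in $H^\alpha$; the content of the theorem is the reverse inequality.

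I would reduce this to a covering lemma: for every $\alpha>0$ there exist $C(\alpha),C'(\alpha)<\infty$, depending on $\alpha$ only through $c(\alpha)$, such that every interval $I\subset[0,1)$ admits a cover by cylinders $\{\Delta_k\}\subset\Phi$ with $\sum_k|\Delta_k|^\alpha\le C(\alpha)|I|^\alpha$ and $\sup_k|\Delta_k|\le C'(\alpha)|I|$. Given the lemma, any $\varepsilon$-covering of $E$ by intervals is upgraded to a $C'\varepsilon$-covering by cylinders whose $\alpha$-sum is at most $C(\alpha)$ times the original, giving $H^\alpha(E,\Phi)\le C(\alpha)H^\alpha(E)$ and hence $\dim_H(E,\Phi)\le\dim_H(E)$.

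To build the cover of $I$, let $\Delta^*$ be the smallest cylinder containing $I$; by minimality $I$ meets at least two consecutive subcylinders $\Delta^*_j$ of $\Delta^*$, indexed by a run $J\subset\mathbb{N}_0$. The run $J$ is infinite precisely when $I$ reaches the right endpoint of $\Delta^*$, at which the sibling cylinders accumulate. I would use (i) each $\Delta^*_j$ with $\min J<j<\max J$ verbatim (these lie entirely in $I$); (ii) the sibling tail $\{\Delta^*_j\}_{j>\max J}$ when $J$ is infinite; and (iii) recursive coverings of the boundary intersections $I\cap\Delta^*_{\min J}$ and, in the finite case, $I\cap\Delta^*_{\max J}$. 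Self-similarity of the IFS reduces each recursive call to the same problem on a subinterval of $[0,1)$ inside a strictly smaller minimal cylinder. The hypothesis enters both in the direct form $\sum_{k\ge i+1}q_k^\alpha\le c(\alpha)q_i^\alpha$, bounding the $\alpha$-mass of the possibly infinite sibling tail, and in the sharper form $\sum_{k\ge i+1}q_k^\alpha\le(1+c(\alpha))q_{i+1}^\alpha$ obtained by re-applying the hypothesis at $i+1$, which in addition makes this $\alpha$-mass comparable to the $\alpha$th power of the tail's own length.

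The main obstacle is to verify that the induction closes with a constant uniform in $I$. For $\alpha>1$ the superadditivity $(a+b)^\alpha\ge a^\alpha+b^\alpha$ makes the split of $|I|^\alpha$ across the recursive and direct pieces automatic. For $\alpha\le 1$ the opposite inequality holds, so the accounting must match the tail's $\alpha$-mass against the $\alpha$th power of the tail's own length using the sharper estimate above, and use the $\alpha=1$ consequence $\sum_{k\ge i+1}q_k\le c(1)q_i$ of the hypothesis to prevent the length of $I$ from concentrating so close to the accumulation point of the sibling cylinders that the recursion constant would blow up. Carrying through this book-keeping by induction on the rank of the minimal containing cylinder is where the hypothesis does its essential work, yielding the required uniform $C(\alpha)$.
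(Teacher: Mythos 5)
The reduction you propose is exactly where the argument breaks: under hypothesis \eqref{eq:condfaithnew} alone, your covering lemma (every interval $I$ has a cylinder cover with $\sum_k|\Delta_k|^\alpha\le C(\alpha)|I|^\alpha$) is false for $\alpha<1$, so no amount of book-keeping will make the induction close with a constant uniform in $I$. The trouble is the left-endpoint recursion: it runs for as many levels as the expansion of $\inf I$ dictates, and the leading sibling-tail cylinders produced at successive levels need not shrink geometrically relative to one another, even though each is $\le|I|$. Concretely, take $q_i=\theta^{2^i}$ for $i\ge1$ (with $0<\theta<1$ and $q_0$ the remaining mass); this vector satisfies \eqref{eq:condfaithnew} for every $\alpha>0$. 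For large $N$ let $a$ be the left endpoint of $\Delta_{\beta_1\dots\beta_{N-1}}$ with descending digits $\beta_k=N+1-k$, and let $I=[a,\sup\Delta_{\beta_1})$. Since $q_{i+1}=q_i^2$, every first tail cylinder $\Delta_{\beta_1\dots\beta_k(\beta_{k+1}+1)}$ has the same length $L=\theta^{2^{N+1}}$, and $|I|\asymp NL$. Each of these $N-2$ disjoint cylinders must be covered either from inside (cost at least $L^\alpha$ by subadditivity of $x\mapsto x^\alpha$) or by one of its ancestors $\Delta_{\beta_1\dots\beta_m}$, whose length is $L\,\theta^{-2^{N-m+1}}$; optimizing over which ancestors to use still forces a total $\alpha$-cost of order $NL^\alpha\asymp N^{1-\alpha}|I|^\alpha$, which is unbounded in $N$ for $\alpha<1$. (This is precisely the obstruction the paper flags with the example $q_i=a/2^{2^i}$: the first tail cylinders along the recursion can all be of comparable, even maximal, size, and their number is unbounded.) So the comparability statement $H^\alpha(E,\Phi)\le C(\alpha)H^\alpha(E)$ you aim for is not available here, and your sketch never supplies the missing mechanism; the appeal to the $\alpha=1$ case of the hypothesis does not prevent this configuration.

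The theorem nevertheless holds because faithfulness does not require comparable net measures. The paper's proof keeps your decomposition (sibling tails at each level, controlled within each level by \eqref{eq:condfaithnew} via the factor $1+c(\alpha)$, plus the one-cylinder treatment of the right piece), but weakens the target: writing $|\Delta|^\alpha\le|E_j|^{\alpha-\delta}|\Delta|^{\delta}$ and using that a cylinder at relative depth $k$ has length at most $\mathbf{q}^{\,k-1}$ with $\mathbf{q}=\max_i q_i<1$, the sum over the unboundedly many levels is dominated by a convergent geometric-type series, giving $\sum_k|\Delta_k|^\alpha\le K(\alpha,\delta)|E_j|^{\alpha-\delta}$ and hence $H^\alpha(E,\Phi)\le K(\alpha,\delta)H^{\alpha-\delta}(E)$ for every $\delta\in(0,\alpha)$. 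This $\delta$-loss (in the example above only a factor of order $\ln\ln(1/|I|)$) is harmless for Hausdorff dimension and yields $\dim_H(E,\Phi)\le\dim_H(E)+\delta$ for all $\delta>0$, which is the statement to prove. If you want to salvage your plan, replace the uniform-constant covering lemma by this $\delta$-weakened version; as stated, the key lemma is unprovable.
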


\begin{proof}
For the calculation of the Hausdorff dimension of a set $E\subset [0, 1)$ we may consider only coverings of intervals $[a_j, b_j)$ such that $a_j\in A,$ $b_j\in A$ and the set $A$ is an everywhere dense set on $[0, 1].$ Let us choose $A$ to be the set of all $Q_\infty$-rational numbers.

For a given set $E,$ $\alpha>0,$ $\epsilon>0$ let us choose $\delta\in (0, \alpha),$ and let $\{E_j\}$ be an $\epsilon$-covering of $E$ by above intervals $E_j=[a_j, b_j),$ $a_j\in A,$ $b_j\in A.$

For any $j\in \mathbb{N}$ there exists the cylinder $\Delta_{\alpha_1\alpha_2\dots\alpha_{n_j}}$ of maximal rank that contains $[a_j, b_j)$ (i.\,e., any cylinder of $(n_j+1)$-th rank does not contain $[a_j, b_j)$). So, the $a_j$
have the following $Q_\infty$-expansion:
\begin{equation*}
a_j=\Delta_{\alpha_1\alpha_2\dots\alpha_{n_j}\alpha_{n_j+1}\dots\alpha_{n_j+l_j}00\dots},
\end{equation*}
where $\alpha_k=\alpha_k(a_j).$
Then, the point
$c_j=\Delta_{\alpha_1\dots\alpha_{n_j}(\alpha_{n_j+1}+1)00\dots}$
belongs to~$[a_j, b_j).$

Let $\beta_k\!:=\alpha_{n_j+k}(a_j)$ be the $(n_j+k)$-th digit of the $Q_\infty$-expansion of $a_j,$ $k\in\{1, 2, \dots, l_j\}.$
To cover $E_j$ by cylinders we consider the coverings of $[a_j, c_j)$ and $[c_j, b_j)$ separately.

If the cylinder $\Delta_{\alpha_1\dots\alpha_{n_j}(\beta_1+1)}$ belongs to $[c_j, b_j),$ then $[c_j, b_j)$ can be covered by cylinders
\begin{equation*}
\Delta_{\alpha_1\dots\alpha_{n_j}(\beta_1+i)}, \; i\in\mathbb{N}.
\end{equation*}

The $\alpha$-volume of this covering is equal to

\begin{eqnarray}
\sum_{i=1}^\infty|\Delta_{\alpha_1\dots\alpha_{n_j}(\beta_1+i)}|^\alpha\leq|\Delta_{\alpha_1\dots\alpha_{n_j}(\beta_1+1)}|^\alpha+\sum_{i=2}^\infty|\Delta_{\alpha_1\dots\alpha_{n_j}(\beta_1+i)}|^\alpha\leq\nonumber\\
\leq(1+c)|\Delta_{\alpha_1\dots\alpha_{n_j}(\beta_1+1)}|^\alpha\leq(1+c)|E_j|^\alpha.\nonumber
\end{eqnarray}
If the cylinder $\Delta_{\alpha_1\dots\alpha_{n_j}(\beta_1+1)}$ does not belong to $[c_j, b_j),$
then there exists a positive integer $t_j$ such that the cylinder
\begin{equation*}\label{eq:cylindercontainer}
\Delta_{\alpha_1\dots\alpha_{n_j}(\beta_1+1)\underbrace{0\dots0}_{t_j}}
\end{equation*}
contains $[c_j, b_j)$ and the cylinder $\Delta_{\alpha_1\dots\alpha_{n_j}(\beta_1+1)\underbrace{0\dots0}_{t_j+1}}$ is contained in $[c_j, b_j).$

In such a case the cylinder \eqref{eq:cylindercontainer} covers $[c_j, b_j)$ and its diameter does not exceed $\frac{1}{q_0}|E_j|.$
So, $[c_j, b_j)$ can be covered by cylinders from $\Phi$ such that the corresponding $\alpha$-volume does not exceed $B_0^\alpha\,|E_j|^\alpha,$ where $B_0=\max\{1+c, \frac{1}{q_0}\}.$

The most difficult point in the proof is to find small enough (in the sense of $\alpha$-volume) covering of $[a_j, c_j)$ by cylinders from~$\Phi.$

The set $[a_j, c_j)$ contains non-intersecting cylinders of the following ranks.

\begin{eqnarray*}
&\text{Rank}\, (n_j+2): \quad &\Delta_{\alpha_1\dots\alpha_{n_j}\beta_1 i}, \quad i>\beta_2.\\
&&\vdots\\
&\text{Rank}\, (n_j+l_j-1): \quad &\Delta_{\alpha_1\dots\alpha_{n_j}\beta_1\dots \beta_{l_j-2} i}, \quad i>\beta_{l_j-1}.\\
&\text{Rank}\, (n_j+l_j): \quad &\Delta_{\alpha_1\dots\alpha_{n_j}\beta_1\dots \beta_{l_j-1} i}, \quad i\geq \beta_{l_j}.
\end{eqnarray*}

We recall that $a_j=\inf \Delta_{\alpha_1\dots\alpha_{n_j}\beta_1\beta_2\dots \beta_{l_j-1}\beta_{l_j}}.$

So, $[a_j, c_j)$ is split into the union of the above cylinders of ranks $(n_j+2), \dots, (n_j+l_j).$
The $\alpha$-volume of cylinders of $(n_j+k)$-th rank ($k<l_j$) is equal to
$$\sum_{i=\beta_k+1}^\infty|\Delta_{\alpha_1\dots\alpha_{n_j}\beta_1\dots \beta_{k-1}i}|^\alpha\leq(1+c)|\Delta_{\alpha_1\dots\alpha_{n_j}\beta_1\dots \beta_{k-1}(\beta_k+1)}|^\alpha$$
and
$$\sum_{i=\beta_{l_j}}^\infty|\Delta_{\alpha_1\dots \alpha_{n_j}\beta_1\dots \beta_{l_j-1} i}|^\alpha\leq (1+c)|\Delta_{\alpha_1\dots\alpha_{n_j}\beta_1\dots \beta_{l_j-1} \beta_{l_j}}|^\alpha.$$

Thus $[a_j, c_j)$ can be covered by a countable number of above cylinders, and the corresponding $\alpha$-volume does not exceed the value
\begin{equation}\label{eq:sumalphacovering}
\mathrm{v}_j(\alpha)=(1+c)\big(|\Delta_{\alpha_1\dots\alpha_{n_j}(\beta_1+1)}|^\alpha+\dots+|\Delta_{\alpha_1\dots\alpha_{n_j}\beta_1\dots \beta_{l_j}}|^\alpha\big).
\end{equation}

Unfortunately in the latter sum the distribution of lengths of cylinders can be arbitrary. In particular, even the cylinder of maximal rank can be of maximal length (one can check this by considering the case where $q_i=\frac{a}{2^{2^i}}$), and the sequence $\{l_j\}$ is, generally speaking, unbounded.

For a given $l_j\in\mathbb{N}$ one can choose $d_j\in\mathbb{N}$ such that $2^{d_j-1}<l_j\leq 2^{d_j}.$ Let $\mathbf{q}\!:=\max_i(q_i).$ Then the above sum~\eqref{eq:sumalphacovering} can be rewritten as follows:

\begin{align*}
\mathrm{v}_j(\alpha)&=(1+c)\Big[\sum_{i=1}^{d_j-1}\sum_{2^{i-1}<k\leq 2^i}|\Delta_{\alpha_1\dots\alpha_{n_j}\beta_1\dots \beta_{k-1}(\beta_k+1)}|^\alpha+\\
&+\sum_{2^{d_j-1}<k\leq l_j-1}|\Delta_{\alpha_1\dots\alpha_{n_j}\beta_1\dots \beta_{k-1}(\beta_k+1)}|^\alpha+|\Delta_{\alpha_1\dots\alpha_{n_j}\beta_1\dots \beta_{l_j}}|^\alpha\Big]\leq\\
&\leq (1+c)|E_j|^{\alpha-\delta}\sum\limits_{k=0}^{d_j-1}2^k (\mathbf{q}^\delta)^{2^k}<(1+c)|E_j|^{\alpha-\delta}\sum_{s=1}^\infty s(\mathbf{q}^\delta)^{\frac s2}(\mathbf{q}^\delta)^{\frac s2},
\end{align*}
for any $\delta\in (0, \alpha).$

It is clear that there exists a constant $W(\delta)$ such that $s(\mathbf{q}^\delta)^{\frac s2}\leq W(\delta),$ $\forall s\in\mathbb{N}.$
Therefore,
\begin{align*}
\mathrm{v}_j(\alpha)\leq (1+c)|E_j|^{\alpha-\delta} W(\delta)\sum_{s=1}^\infty(\mathbf{q}^\delta)^{\frac s2}=\frac{(1+c)W(\delta)\mathbf{q}^{\frac \delta 2}}{1-\mathbf{q}^{\frac \delta 2}}|E_j|^{\alpha-\delta}.
\end{align*}

So, for a given $E_j=[a_j, b_j)$ there exists a countable family of cylinders that cover $E_j$ and whose $\alpha$-volume does not exceed
$K(\alpha, \delta)|E_j|^{\alpha-\delta}$ with $$K(\alpha, \delta)\!:=B_0+\frac{(1+c(\alpha))W(\delta)\mathbf{q}^{\frac \delta 2}}{1-\mathbf{q}^{\frac \delta 2}}.$$

Therefore, $\forall\alpha>0,$ $\forall\delta\in(0, \alpha),$ $\forall E\subset[0, 1)$ we have
$$H^\alpha(E)\leq H^\alpha(E, \Phi)\leq K(\alpha, \delta)H^{\alpha-\delta}(E).$$

Hence, $\dim_H(E, \Phi)\leq \dim_H (E)+\delta,$ $\forall \delta\in (0, \alpha),$ which proves the equality $\dim_H(E, \Phi)=\dim_H(E).$
\end{proof}

\begin{corollary}
If $\varlimsup\limits_{n\to\infty}\frac{q_{n+1}}{q_n}<1,$ then $\Phi$ is faithful.
\end{corollary}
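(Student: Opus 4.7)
The plan is to derive the hypothesis \eqref{eq:condfaithnew} of Theorem 1 from the ratio condition and then invoke that theorem directly. Fix an arbitrary $\alpha>0$ and choose some $r$ with $\varlimsup_{n\to\infty} q_{n+1}/q_n < r < 1$. By the definition of $\varlimsup$, there exists an index $N=N(r)$ such that $q_{n+1}/q_n\le r$ for every $n\ge N$. Iterating this inequality gives the uniform decay estimate $q_{n+k}\le r^{k}q_{n}$ for all $n\ge N$ and all $k\ge 0$.

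Using this, for any $i\ge N$ we get
$$\sum_{k=i+1}^{\infty}q_k^{\alpha} \;=\; \sum_{k=1}^{\infty}q_{i+k}^{\alpha} \;\le\; q_i^{\alpha}\sum_{k=1}^{\infty} r^{k\alpha} \;=\; \frac{r^{\alpha}}{1-r^{\alpha}}\,q_i^{\alpha},$$
which is exactly \eqref{eq:condfaithnew} for indices $i\ge N$, with constant depending only on $r$ and $\alpha$. For the finitely many remaining indices $i\in\{0,1,\dots,N-1\}$, the same geometric estimate (applied from $N$ onwards) shows that the tail $\sum_{k=i+1}^{\infty}q_k^{\alpha}$ is finite, and since $q_i>0$ the ratio $\sum_{k=i+1}^{\infty}q_k^{\alpha}/q_i^{\alpha}$ is a well-defined finite positive number. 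Taking
$$c(\alpha)\;:=\;\max\left\{\frac{r^{\alpha}}{1-r^{\alpha}},\;\max_{0\le i<N}\frac{\sum_{k=i+1}^{\infty}q_k^{\alpha}}{q_i^{\alpha}}\right\}$$
produces a single constant that validates \eqref{eq:condfaithnew} for every $i\in\mathbb{N}_0$.

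Once \eqref{eq:condfaithnew} is established for every $\alpha>0$, Theorem 1 applies verbatim and yields the faithfulness of $\Phi$. I do not anticipate any substantive obstacle here: the exponential decay encoded in the ratio condition is considerably stronger than what \eqref{eq:condfaithnew} demands, and the finite exceptional set $\{0,\ldots,N-1\}$ is absorbed harmlessly into a maximum. The only care needed is to emphasize that the choice of $r$, and hence of $N$ and of $c(\alpha)$, is allowed to depend on $\alpha$, which is consistent with the statement of Theorem 1.
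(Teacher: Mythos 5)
Your proposal is correct and is exactly the intended derivation: the ratio condition gives geometric decay $q_{i+k}\le r^k q_i$ beyond some index $N$, from which the tail estimate \eqref{eq:condfaithnew} follows with $c(\alpha)=\frac{r^\alpha}{1-r^\alpha}$ for $i\ge N$, the finitely many remaining indices being absorbed into a maximum, so Theorem 1 applies. The paper states the corollary without proof, and your argument fills in precisely the step it leaves implicit (note only that $r$ and $N$ need not depend on $\alpha$ at all, though this is harmless).
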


\begin{remark}
  From the latter theorem it follows that $\Phi$ is faithful for the case where the series $\sum\limits_{i=1}^\infty q_i$ converges rather quickly in the sense of \eqref{eq:condfaithnew}.
\end{remark}

The following rather unexpected  Theorem shows that the fine covering system generated by the $Q_{\infty}$-expansion is not necessarily faithful, which shows a new phenomenon related to infinite IFS.

\begin{theorem}\label{theorem on simple sufficient cond for non-faithfulness}
 If there exist constants $m_0>1,$ $A>0$ and $B>0$ such that
 \begin{equation}\label{cond for faithfulness}
   \frac{A}{i^{m_0}} \leq q_i \leq \frac{B}{i^{m_0}}, \forall i \in\mathbb{N},
 \end{equation}
 then  the fine covering system generated by the $Q_{\infty}$-expansion is non-faithful.
 \end{theorem}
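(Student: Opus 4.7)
The plan is to exhibit a set $E\subset[0,1)$ on which the Hausdorff dimension computed with $\Phi$-cylinders is strictly larger than the one computed with arbitrary coverings. Following the spirit of Peres' construction for continued fractions, I would set
\begin{equation*}
  E = \bigl\{x\in[0,1) : K_n \le \alpha_n(x) \le 2K_n \text{ for every } n\in\mathbb{N}\bigr\},
\end{equation*}
with a rapidly growing sequence $K_n$, for concreteness $K_n=2^{2^n}$. The underlying geometric idea is that under the two-sided polynomial bound \eqref{cond for faithfulness} the subcylinders $\Delta_{\alpha_1\dots\alpha_{n-1}k}$ with $k\in[K_n,2K_n]$ cluster near the right endpoint of their parent cylinder, so they can be efficiently absorbed in one short interval, whereas any $\Phi$-cover is forced to pay for each of them individually.

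First I would obtain an upper bound on $\dim_H(E)$. For each tuple $(\alpha_1,\dots,\alpha_{n-1})$ with $\alpha_k\in[K_k,2K_k]$, the part of $E$ inside $\Delta_{\alpha_1\dots\alpha_{n-1}}$ is contained in the union of subcylinders $\Delta_{\alpha_1\dots\alpha_{n-1}k}$ with $K_n\le k\le 2K_n$; since these are consecutive, they sit in an interval of length at most $|\Delta_{\alpha_1\dots\alpha_{n-1}}|\sum_{k=K_n}^{2K_n}q_k$, which by \eqref{cond for faithfulness} is $\le C\,|\Delta_{\alpha_1\dots\alpha_{n-1}}|\cdot K_n^{1-m_0}$. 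Using $|\Delta_{\alpha_1\dots\alpha_{n-1}}|\asymp\prod_{k=1}^{n-1}K_k^{-m_0}$ and the fact that there are $\prod_{k=1}^{n-1}(K_k+1)$ such parent cylinders, the $\alpha$-volume of the resulting ordinary cover is bounded up to a constant by
\begin{equation*}
  K_n^{\alpha(1-m_0)}\prod_{k=1}^{n-1}K_k^{1-m_0\alpha}.
\end{equation*}
For $K_n=2^{2^n}$ the logarithm of this quantity equals $\bigl(1-\alpha(2m_0-1)\bigr)\,2^n+O(1)$, which tends to $-\infty$ whenever $\alpha>1/(2m_0-1)$. Hence $\dim_H(E)\le 1/(2m_0-1)$.

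Next I would establish $\dim_H(E,\Phi)\ge 1/m_0$ via the mass distribution principle restricted to $\Phi$. Put on $E$ the product probability measure $\mu$ that makes the digits $\alpha_n$ independent and uniform on $\{K_n,K_n+1,\dots,2K_n\}$; then for cylinders meeting $E$ one has $\mu(\Delta_{\beta_1\dots\beta_m})=\prod_{k=1}^m(K_k+1)^{-1}$, while $|\Delta_{\beta_1\dots\beta_m}|\asymp\prod_{k=1}^m K_k^{-m_0}$ again by \eqref{cond for faithfulness}. For any $s\le 1/m_0$ the ratio $\mu(\Delta)/|\Delta|^s$ is bounded uniformly over all cylinders (those disjoint from $E$ contribute zero). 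The cylinder-restricted Frostman lemma then gives $\dim_H(E,\Phi)\ge s$ for every $s<1/m_0$, hence $\dim_H(E,\Phi)\ge 1/m_0$.

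Since $m_0>1$ implies $1/(2m_0-1)<1/m_0$, the two bounds separate and $\Phi$ fails to be faithful on $E$. The main technical obstacle will be the first step: verifying carefully, using only the two-sided polynomial estimate \eqref{cond for faithfulness} with constants $A,B$, that the tail of consecutive subcylinders indexed by $k\in[K_n,2K_n]$ actually fits inside a parent interval of the required length, and that the constants collected from \eqref{cond for faithfulness}, from $\sum_{k=K_n}^{2K_n}q_k\asymp K_n^{1-m_0}$, and from the mass distribution comparison do not depend on $n$. The particular choice $K_n=2^{2^n}$ is convenient; any sufficiently fast growth would suffice and in fact yields a sharper gap.
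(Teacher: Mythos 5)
Your proposal is correct and follows essentially the same route as the paper: a Cantor-like set whose $n$-th digit is confined to a rapidly growing band, the interval hull of the admissible subcylinders yielding a cheap ordinary cover (hence small classical Hausdorff dimension), and a product measure together with a cylinder-restricted mass-distribution (Billingsley/Frostman) argument yielding $\dim_H(\cdot,\Phi)\ge \frac{1}{m_0}$ --- the paper merely tunes the bands (taking $l_{2k+1}=(l_{2k}+1)^2$ with $l_{2k}$ growing suitably) so that the classical dimension is exactly $0$, and uses the normalized weights $q_i/\gamma_k$ instead of your uniform digit distribution. One small correction: once the rank-dependent constants (factors like $A^{m}$, $B^{m}$, $2^{m_0 m}$ hidden in your $\asymp$) are tracked, the error in your logarithmic estimate is $O(n)$ rather than $O(1)$ and the ratio $\mu(\Delta)/|\Delta|^{s}$ is uniformly bounded only for $s<\frac{1}{m_0}$ (not $s\le\frac{1}{m_0}$), but since the doubly exponential growth of $K_n$ dominates these terms and your Frostman step only uses $s<\frac{1}{m_0}$, the conclusion $\dim_H(E)\le \frac{1}{2m_0-1}<\frac{1}{m_0}\le\dim_H(E,\Phi)$ stands.
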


    \begin{proof}


Let us construct a set whose Hausdorff dimension is equal to zero, but the corresponding value of the Hausdorff dimension w.r.t. the fine covering system $\Phi$ is at least~$\frac{1}{m_0}$.

To this end we shall investigate properties of  the Cantor-like set
\begin{equation*}
C[Q_\infty, \{V_k\}]=\{x:\quad x=\Delta_{\alpha_1(x)\ldots\alpha_k(x)\ldots}, \alpha_k(x)\in V_k\},
\end{equation*}
          where $$V_k=\{i:\quad i\in\mathbb{N}, l_{2k}\leq i\leq l_{2k+1}\}$$
          and the sequences $\{l_{2k}\}$, $\{l_{2k+1}\}$, $\{M_{k}\}$ are defined recursively as follows:

$$
M_0\!:=1,  ~~l_{2}=2 M_{0}=2, ~~l_{3}=(l_{2}+1)^2=9;
$$
\begin{equation*}
              ~~l_{2k}=\big(2^k M_{k-1}\big)^k, l_{2k+1}=(l_{2k}+1)^2,  M_{k-1}=l_3\cdot l_5\cdot\ldots\cdot l_{2k-1}, \forall k\in\mathbb{N}.
\end{equation*}
          Firstly, let us prove that $\dim_H (C[Q_\infty,\{V_k\}])=0$. From the construction of the set  $C[Q_\infty,\{V_k\}]$ it follows that for any $k \in\mathbb{N}$ this set can be covered by $M_k$ cylinders of rank~$k.$

          On the other hand, this set can  be covered by $M_{k-1}$ intervals $\nabla_{\alpha_1\ldots\alpha_{k-1}}$, which are unions of cylinders of rank $k$, i.\,e.,
          $$\nabla_{\alpha_1\ldots\alpha_{k-1}}=\bigcup\limits_{i=l_{2k}}^{l_{2k+1}}
          \Delta_{\alpha_1\ldots\alpha_{k-1}i}.$$
                      It is not hard to see that
                     $$  |\nabla_{\alpha_1\ldots\alpha_{k-1}}|
                          \leq \sum\limits_{i=l_{2k}}^{l_{2k+1}}\frac{B}{i^{m_0}} \leq \int\limits_{l_{2k}-1}^{l_{2k+1}}\frac{B}{x^{m_0}}dx  \leq  \frac{2^{m_0-1}B}{l_{2k}^{m_0-1}}.
          $$

          Therefore, for any positive $\alpha$ the $\alpha$-volume of the latter covering of the set $C[Q_\infty,\{V_k\}]$ does not exceed the value $$\frac{M_{k-1} 2^{\alpha (m_0-1) }B^{\alpha}}{(2^k M_{k-1})^{\alpha k (m_0-1)}},$$
          which tends to zero as $k$ tends to infinity.

          So, \begin{equation*}
            \dim_H(C[Q_\infty,\{V_k\}])=0.
          \end{equation*}

            Now we show that $\dim_H(C[Q_\infty,\{V_k\}],\Phi)>0.$

             Let  $\xi$ be the random variable with independent $Q_{\infty}$-digits, which is uniformly distributed on the set  $C[Q_\infty,\{V_k\}]$, i.\,e., $\xi$ is of the form
            $$\xi=\Delta_{\xi_1\ldots\xi_k\ldots},$$
             where $\xi_k$ are independent random variables taking values
             $$l_{2k},~~~  l_{2k}+1,~~ \ldots,   l_{2k+1}$$ with probabilities
                  $$\frac{1}{\gamma_k}\cdot q_{l_{2k}},~~ \frac{1}{\gamma_k}\cdot q_{l_{2k}+1},~  \ldots~,  \frac{1}{\gamma_k}\cdot
                  q_{l_{2k+1}}$$
                   correspondingly. The normalizing constants $\gamma_k$ are determined by $$\sum\limits_{i=l_{2k}}^{l_{2k+1}}\frac{1}{\gamma_k} q_i=1.$$ Then, taking into account assumptions (\ref{cond for faithfulness}), we get
                $$\gamma_k \geq A\cdot\sum\limits_{i=l_{2k}}^{l_{2k+1}}\frac{1}{i^{m_0}}=
                   A\cdot\left(\frac{1}{l_{2k}^{m_0}}+\sum\limits_{i=l_{2k}+1}^{l_{2k+1}}\frac{1}{i^{m_0}}\right)\geq $$
                    \begin{equation*}
\geq A\cdot\left(\frac{1}{l_{2k}^{m_0}}+\int_{l_{2k}+1}^{l_{2k+1}+1}\frac{1}{x^{m_0}}dx\right) \geq\frac{A}{(m_0-1)\cdot(l_{2k}+1)^{m_0-1}}.
                   \end{equation*}

             Let $\mu_\xi$ be the probability measure of $\xi$. Then
               \begin{eqnarray*}
                  \mu_\xi(\Delta_{\alpha_1\ldots
                  \alpha_k})&=&\frac{1}{\gamma_1}q_{\alpha_1}\cdot\ldots
                  \cdot\frac{1}{\gamma_k}q_{\alpha_k}\leq  \frac{1}{\gamma_k}\cdot q_{\alpha_k}
                  \leq\frac{D\cdot l_{2k}^{m_0-1}}{\alpha_k^{m_0}},
               \end{eqnarray*}
             where $D=\frac{B(m_0-1)2^{m_0-1}}{A}$.

             On the other hand we have
               \begin{eqnarray*}
                   |\Delta_{\alpha_1\alpha_2\ldots \alpha_k}|\geq
                   \frac{A^k}{M^{m_0}_{k-1}}\cdot\frac{1}{\alpha_k^{m_0}}.
               \end{eqnarray*}
             So, for any $x\in C[Q_\infty,\{V_k\}]$ and for  any  $\alpha \in (0,  \frac {1}{m_0})$
              \begin{eqnarray*}
                  \frac{\mu_\xi(\Delta_{\alpha_1(x)\alpha_2(x)\ldots \alpha_k(x)})}{|\Delta_{\alpha_1(x)\alpha_2(x)\ldots
                  \alpha_k(x)}|^\alpha}&\leq& \frac{D\cdot
                  l_{2k}^{m_0-1}}{\alpha_k^{m_0}}:\left(\frac{A^k}{M^{m_0}_{k-1}\alpha_k^{m_0}}\right)^\alpha\leq\\
                  &\leq&
                  \frac{D}{M_{k-1}^{k(1-\alpha m_0)-\alpha m_0}\cdot(A^{\alpha }2^{k(1-\alpha m_0)})^k}.
              \end{eqnarray*}
                         So,
              \begin{equation*}
                \lim_{k\to\infty}\frac{\mu_\xi(\Delta_{\alpha_1(x)\alpha_2(x)\ldots \alpha_k(x)})}{|\Delta_{\alpha_1(x)\alpha_2(x)\ldots
                \alpha_k(x)}|^\alpha}=0,
              \end{equation*}
            for all $\alpha \in (0,  \frac{1}{m_0})$ and $x\in C[Q_\infty,\{V_k\}],$
            and, therefore,
              \begin{equation*}
                 \dim_H(C[Q_\infty,\{V_k\}], \Phi) \geq \frac{1}{m_0}\neq 0 =\dim_H(C[Q_\infty,\{V_k\}]),
              \end{equation*}
which proves the theorem.
\end{proof}

\begin{remark}\label{remark on cont fractions}
  The  proof of the latter theorem is  based on the  method which was invented by Yuval Peres (see \cite{PerTor} for details) to prove the non-faithfulness of the family of continued fraction cylinders.
\end{remark}

\begin{remark}
  One can prove that $\dim_H(C[Q_\infty,\{V_k\}], \Phi)=\frac{1}{m_0}.$
\end{remark}

 \begin{corollary}
   If $q_i = \frac{1}{P_{m_0}(i)}$ for all $i$ which are large enough and for a polynomial $P_{m_0}(x)$ of a degree $m_0>1$, then the corresponding  fine covering family generated by this $Q_{\infty}$-expansion is non-faithful.
 \end{corollary}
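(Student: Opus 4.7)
The plan is to reduce the corollary directly to Theorem 2. All I need to do is verify that the polynomial hypothesis $q_i = 1/P_{m_0}(i)$ for all sufficiently large $i$ implies the two-sided power bound \eqref{cond for faithfulness} for \emph{all} $i\in\mathbb{N}$, possibly after adjusting constants.

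First I would argue that the leading coefficient of $P_{m_0}$ must be positive. Indeed, $q_i = 1/P_{m_0}(i)$ holds for $i\geq N_0$ for some $N_0\in\mathbb{N}$, and the $q_i$ are positive (as the $Q_\infty$-cylinders of positive length are required for the expansion to be well defined). Since $P_{m_0}$ is a polynomial of degree $m_0>1$ whose values are positive for all large $i$, its leading coefficient is positive, and moreover $P_{m_0}(i)\to+\infty$ as $i\to\infty$. Writing $P_{m_0}(x)=a_{m_0}x^{m_0}+a_{m_0-1}x^{m_0-1}+\dots+a_0$ with $a_{m_0}>0$, a standard asymptotic comparison gives constants $A',B'>0$ and $N_1\geq N_0$ such that
\[
\frac{A'}{i^{m_0}}\leq \frac{1}{P_{m_0}(i)}\leq \frac{B'}{i^{m_0}}, \quad \forall i\geq N_1.
\]

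Next I would extend this bound to small indices by exploiting the finiteness of the exceptional set. For $i\in\{1,2,\dots,N_1-1\}$ each $q_i$ is a fixed positive number, so the quantities $q_i\,i^{m_0}$ take only finitely many positive values. Setting
\[
A := \min\Bigl\{A',\ \min_{1\leq i< N_1} q_i\cdot i^{m_0}\Bigr\}, \qquad B := \max\Bigl\{B',\ \max_{1\leq i< N_1} q_i\cdot i^{m_0}\Bigr\},
\]
we obtain $0<A\leq B<\infty$ and
\[
\frac{A}{i^{m_0}}\leq q_i\leq \frac{B}{i^{m_0}}, \quad \forall i\in\mathbb{N}.
\]
Thus the hypothesis \eqref{cond for faithfulness} of Theorem~\ref{theorem on simple sufficient cond for non-faithfulness} is satisfied, and the non-faithfulness of the fine covering system generated by this $Q_\infty$-expansion follows immediately from that theorem.

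There is essentially no hard step here; the entire content is the reduction via asymptotic comparison of $1/P_{m_0}(i)$ with $1/i^{m_0}$ and the trivial adjustment of constants on the finite set $\{i<N_1\}$. The only mild subtlety is making sure that the hypothesis of Theorem~2 is interpreted as requiring the two-sided inequality on all of $\mathbb{N}$, which is why the step handling the finite initial segment is stated explicitly.
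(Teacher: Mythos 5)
Your proof is correct and is exactly the intended argument: the paper states this corollary without proof precisely because it is an immediate application of Theorem~\ref{theorem on simple sufficient cond for non-faithfulness}, once one notes that $1/P_{m_0}(i)$ is comparable to $1/i^{m_0}$ for large $i$ and adjusts the constants $A,B$ on the finite initial segment. Your explicit handling of the positivity of the leading coefficient and of the finitely many exceptional indices is a careful spelling-out of the same reduction.
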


\begin{corollary}
The family of cylinders generated by the L\"{u}roth expansion \cite{DajaniKraaikamp, Schweiger} is non-faithful.
\end{corollary}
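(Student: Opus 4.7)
The plan is to deduce the corollary as an immediate specialization of Theorem~\ref{theorem on simple sufficient cond for non-faithfulness}, via the preceding polynomial corollary. Recall from the introduction that the classical L\"uroth expansion corresponds precisely to the stochastic vector $Q_\infty = (q_0, q_1, \dots)$ with
$$q_i = \frac{1}{(i+1)(i+2)}, \qquad i \in \mathbb{N}_0.$$
Since $(i+1)(i+2) = P_2(i)$ is a polynomial in $i$ of degree $m_0 = 2 > 1$, the hypothesis of the preceding Corollary (``$q_i = 1/P_{m_0}(i)$ for $i$ large enough and $\deg P_{m_0} > 1$'') is satisfied, and the non-faithfulness conclusion follows.

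If one prefers to bypass the intermediate corollary and apply Theorem~\ref{theorem on simple sufficient cond for non-faithfulness} directly, the only task is to verify the two-sided polynomial estimate \eqref{cond for faithfulness}. For every $i \geq 1$ one has the elementary bounds
$$i^2 \;\leq\; (i+1)(i+2) \;\leq\; (2i)(3i) \;=\; 6\,i^2,$$
which give $\tfrac{1}{6}\cdot i^{-2} \leq q_i \leq 1\cdot i^{-2}$ for all $i \in \mathbb{N}$. Thus \eqref{cond for faithfulness} holds with $m_0 = 2$, $A = 1/6$, $B = 1$. Note that the value $q_0 = 1/2$ is irrelevant because the hypothesis is imposed only on the tail $i \in \mathbb{N}$, so no adjustment of the constants is needed.

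There is essentially no obstacle here: the corollary is simply the specialization of Theorem~\ref{theorem on simple sufficient cond for non-faithfulness} to a concrete sequence, and the main Cantor-like construction $C[Q_\infty,\{V_k\}]$ from the proof of that theorem already produces a set $E \subset [0,1)$ with $\dim_H(E) = 0$ and $\dim_H(E,\Phi) \geq 1/2$ for this choice of $Q_\infty$. This witness set demonstrates the non-faithfulness of the family of L\"uroth cylinders, completing the proof.
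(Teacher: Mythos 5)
Your proof is correct and is exactly the argument the paper intends: the corollary is the specialization of Theorem~\ref{theorem on simple sufficient cond for non-faithfulness} (equivalently, of the polynomial corollary) to $q_i=\frac{1}{(i+1)(i+2)}$, which satisfies \eqref{cond for faithfulness} with $m_0=2$ and suitable constants, as you verify. The only point worth keeping in mind is the paper's remark that the L\"uroth case corresponds to the decreasing ordering of the cylinders, which the authors identify with the $Q_\infty$-expansion since the metric and dimensional theories coincide for both orderings.
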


 \textbf{Open problem.}
   The problem of finding necessary and sufficient conditions for the faithfulness resp. non-faithfulness of a family of  cylinders of a given $Q_{\infty}$-expansion is still open.

\section{Superfractality of the set of $Q_{\infty}$-non-normal numbers}\label{Section 5}

The main aim of this section is to prove the superfractality of the set of $Q_\infty$-essentially non-normal numbers without any additional restrictions on the stochastic vector $Q_{\infty}$.

\begin{theorem}
 The set $L(Q_\infty)$ of $Q_\infty$-essentially non-normal numbers is  of full Hausdorff dimension.
\end{theorem}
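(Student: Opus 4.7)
The plan is to establish $\dim_H L(Q_\infty)\ge 1-\epsilon$ for every $\epsilon>0$ by constructing an explicit Cantor-like subset $C_\epsilon\subset L(Q_\infty)$ with $\dim_H C_\epsilon\ge 1-\epsilon$; since the reverse inequality is automatic this gives $\dim_H L(Q_\infty)=1$, and zero Lebesgue measure of $L(Q_\infty)$ is immediate from the law of large numbers applied to the digits. The set $C_\epsilon$ will be the topological support of a suitably designed probability measure $\mu_\epsilon$, and its dimension will be bounded from below by Billingsley's mass-distribution principle applied to the cylinder structure $\Phi(Q_\infty)$: $\dim_H C_\epsilon\ge d$ whenever the local ratio $\liminf_n \log\mu_\epsilon(\Delta_{\alpha_1(x)\dots\alpha_n(x)})/\log|\Delta_{\alpha_1(x)\dots\alpha_n(x)}|\ge d$ holds $\mu_\epsilon$-a.s. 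As the authors have stressed that no thermodynamic, Gibbsian, or divergence-points machinery is available in this infinite-alphabet regime, the construction of $\mu_\epsilon$ must be direct and combinatorial.

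First I would fix $M=M(\epsilon)$ so large that $\sigma_M:=\sum_{i=0}^M q_i$ is close to $1$; the ``bulk'' alphabet $\{0,1,\dots,M\}$ then supports most of the construction, while the tail $\{i>M\}$ is handled by a sparse insertion mechanism. Pick two probability vectors $\mathbf p^{(0)},\mathbf p^{(1)}$ on $\{0,\dots,M\}$, both close to the renormalized vector $(q_i/\sigma_M)_{i\le M}$ but with $p^{(0)}_i\ne p^{(1)}_i$ for every $i\le M$; this is the oscillation mechanism for the first $M+1$ digits. Choose a very rapidly increasing sequence $T_0<T_1<T_2<\dots$ of epoch endpoints (say $T_k/T_{k+1}\to 0$) and a sequence of ``insertion densities'' $(\delta_k)$ with $\liminf_k\delta_k=0<\limsup_k\delta_k$ and $\limsup_k\delta_k$ small. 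Inside the $k$th epoch $I_k=(T_{k-1},T_k]$, declare a fraction $\delta_k$ of the positions as insertion positions and sample the digit there from the tail distribution $(q_i/(1-\sigma_M))_{i>M}$, while on the remaining bulk positions of $I_k$ sample the digit i.i.d.\ from $\mathbf p^{(k\bmod 2)}$. The resulting product-like law is $\mu_\epsilon$.

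Epoch-wise laws of large numbers combined with Borel--Cantelli estimates summed over the countably many digits will give that, for $\mu_\epsilon$-a.e.\ $x$: (i) each digit $i\le M$ has frequency oscillating between values near $p^{(0)}_i$ and near $p^{(1)}_i$; and (ii) each digit $i>M$ has frequency oscillating between nearly zero (along $n$ where recent $\delta_k\approx 0$) and a strictly positive value (along $n$ where $\delta_k\approx \limsup_k \delta_k$); hence $\mathrm{supp}\,\mu_\epsilon\subset L(Q_\infty)$. Using $|\Delta_{\alpha_1\dots\alpha_n}|=\prod_{j=1}^n q_{\alpha_j}$ and the product form of $\mu_\epsilon$, the local dimension ratio becomes a Cesàro average of per-position entropy-over-log-length ratios; on bulk positions this is a perturbation of $1+\sigma_M\log\sigma_M/h_M$ with $h_M:=-\sum_{i=0}^M q_i\log q_i$ (which tends to $1$ as $M\to\infty$ since $\sigma_M\log\sigma_M\to 0$ while $h_M$ stays bounded below), and the insertion contribution is controlled by $\limsup_k\delta_k$. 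Tuning $M$ large and $\limsup_k\delta_k$ small forces the local dimension to exceed $1-\epsilon$, so Billingsley's lemma yields $\dim_H C_\epsilon\ge 1-\epsilon$.

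The main obstacle is the joint calibration of the five parameters $M$, $(T_k)$, $(\delta_k)$, $\mathbf p^{(0)}$, $\mathbf p^{(1)}$ so that all of the above hold \emph{simultaneously} and $\mu_\epsilon$-a.s.\ for every $i\in\mathbb{N}_0$ at once: one must sum the LLN fluctuations for infinitely many digit counts, balance the positive but vanishing insertion densities against the possibly unbounded per-step Lyapunov exponent $\chi(\mathbf p)$, and keep the dimension loss controlled even in the infinite-entropy case $-\sum q_i\log q_i=\infty$. The delicate point is precisely that no finite-entropy, thermodynamic-formalism, or $\Phi(Q_\infty)$-faithfulness assumption may be invoked, all of which (as Theorem~2 and the introduction show) may fail in general.
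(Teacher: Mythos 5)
Your dimension lower bound rests on ``Billingsley's mass-distribution principle applied to the cylinder structure $\Phi(Q_\infty)$'', and this is the genuine gap: a local-dimension estimate along cylinders only bounds $\dim_H(C_\epsilon,\Phi)$ from below, and since restricting the class of admissible coverings can only increase Hausdorff premeasures one always has $\dim_H(E)\le\dim_H(E,\Phi)$, so a lower bound on the net dimension gives no lower bound on the true Hausdorff dimension unless the net is faithful \emph{for that set}. Theorem~\ref{theorem on simple sufficient cond for non-faithfulness} of the paper shows this implication really fails for $\Phi(Q_\infty)$ (e.g.\ for the L\"uroth vector): the Cantor-like set constructed there has $\dim_H=0$ while exactly the cylinder local-dimension computation you propose yields $\dim_H(\cdot,\Phi)\ge 1/m_0$. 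Your $C_\epsilon$ is particularly exposed, because at insertion positions the digits range over the whole infinite tail alphabet, so inside each parent cylinder the set accumulates at an endpoint where short intervals are far more efficient covers than cylinders. The paper's proof is organized around precisely this obstacle: all non-deterministic digits of $T_{s,l}$ are confined to the finite alphabet $\{0,\dots,l-1\}$, which allows a direct argument that any interval $E_j$ meeting $T_{s,l}$ is absorbed by a single cylinder of length at most $|E_j|/q_1$, hence $\dim_H(T_{s,l})=\dim_H(T_{s,l},\Phi)$. You would need either an analogous admissibility argument for $C_\epsilon$ (not obvious with random unbounded digits) or a genuine ball-based mass-distribution estimate; neither is supplied.

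The second unresolved point is the calibration you yourself flag as ``the main obstacle''. The assertion that the insertion contribution to the local ratio is ``controlled by $\limsup_k\delta_k$'' is not justified without restrictions on $Q_\infty$: at an insertion position the denominator picks up $-\ln q_{\alpha_n}$, which is unbounded and may have infinite expectation under the renormalized tail law (so the SLLN for log-lengths is unavailable), while the numerator picks up only $-\ln q_{\alpha_n}+\ln(1-\sigma_M)$; when the tail is dominated by its first term ($1-\sigma_M\asymp q_{M+1}$) a single inserted digit $M+1$ adds a huge amount to the denominator and essentially nothing to the numerator, and no smallness of the density $\delta_k$ by itself prevents such positions from dragging the liminf of the ratio down. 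The paper neutralizes exactly this by inserting the large digits \emph{deterministically}, digit $j$ with frequency of order $1/m_j$ where $m_j=[\ln^2 q_j]\cdot 2^j$, so that the extra denominator term $\sum_j \ln(1/q_j)/m_j$ converges for an arbitrary stochastic vector and the moment conditions needed to apply the dimension formula of \cite{NT_TVIMS12} for measures with independent $Q_\infty$-digits are verified. Your randomized tail insertion has no such safeguard, so the construction does not yet deliver the theorem ``without any additional restrictions on $Q_\infty$''; the oscillation part of your argument (alternating $\mathbf p^{(0)},\mathbf p^{(1)}$ over rapidly growing epochs, Borel--Cantelli over the countably many digits) is, by contrast, close in spirit to the paper's block construction and is fixable.
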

\begin{proof}
   The main idea of the proof is rather clear:  to construct a countable family of subsets from $L(Q_\infty)$ whose Hausdorff dimension can be arbitrarily close to unity.
  Possible infinite entropy of the stochastic vector $Q_\infty$, possible non-faithfulness of the family $\Phi(Q_\infty)$ and the absence of general formulae for the calculation of the Hausdorff dimension for probability measures with independent $Q_\infty$-symbols (this is still an open problem (see, e.g., \cite{NT_TVIMS12})) do not allow us to apply methods from \cite{AKNT2, APT3} to construct such a family. Methods from \cite{Ols2, Ols1}  are also not applicable to solve the problem because of the absence of `divergent points techniques' for the measures generated by infinite IFS.

To overcome the above problems, for any
 stochastic vector $Q_\infty$ we shall construct a two-parametric family of subsets $T_{s, l}=T_{s,l} (Q_\infty)$ with a desired properties and apply the developed probabilistic and Hausdorff dimension   techniques to estimate~$\dim_H(T_{s, l}).$

   So,  let $s$ and $l>2$ be fixed positive integers. For a given stochastic vector $Q_\infty = (q_0, q_1, ..., q_i, ...)$ let $m_0\!:=1$ and $m_i\!:=[\ln^2 q_i]\cdot 2^i,$ $\forall i\in\mathbb{N},$ and let $T_{s, l}$ be the set of real numbers whose $Q_\infty$-symbols can be separated into groups such that the $k$-th group is of the following structure
$$\alpha_{k, 1}\dots\alpha_{k, sR_k}\underbrace{0\dots 0}_{R_k} \,\underbrace{1\dots 1}_{\frac{R_k}{m_1}}\ldots \underbrace{i\dots i}_{\frac{R_k}{m_i}}\dots\underbrace{(k-2)\dots(k-2)}_{\frac{R_k}{m_{k-2}}}\,\underbrace{(k-1)\dots(k-1)}_{\frac{R_km_k}{m_{k-1}(m_k-1)}},$$
where $R_1\!:=m_1,$ and $R_k\!:=m_1m_2\cdot\ldots\cdot m_{k-1}(m_k-1),$ $\forall k\geq 2$ and symbols $\alpha_{k, 1},$ $\alpha_{k, 2}, \ldots,$ $\alpha_{k, sR_k}$ can be chosen independently from the set $\{0, 1, \dots, l-1\}.$

 Let us denote by $\mathrm{Fix}(j)$ the set of  numbers of  positions of the fixed digit `$j$' in the $Q_\infty$-expansion of~$x\in T_{s, \; l}.$

 Let
 $$\mathrm{Fix}=\bigcup_{j=0}^\infty \mathrm{Fix}(j);\;\; \;\; \mathrm{Flex}=\mathbb{N}\setminus\mathrm{Fix}.$$

  Then the set $T_{s, \; l}$ can be defined by
  \begin{eqnarray*}
    T_{s, \; l}=\{x:\; &&x=\Delta_{\alpha_1(x)\alpha_2(x)\ldots\alpha_n(x)\ldots};\\
    &&\alpha_n(x)=j \;\text{for all} \;n\in\mathrm{Fix}(j), \; j\in\mathbb{N}_0;\\
    &&\alpha_n(x)\in\{0, 1, \ldots, l-1\} \;\text{for all} \;n\in\mathrm{Flex}\}.
  \end{eqnarray*}

  Firstly let us show that $T_{s,\; l}\subset L(Q_\infty).$ To this end we must prove that  for any $x\in T_{s, \; l}$ and for any digit $i \in \mathbb{N}_0$ the limit $\lim\limits_{n\to\infty}\frac{N_i(x, n)}{n}$ does not exist. Since the proof is very similar for all digits we shall explain it only for the digit $0$.
    Let $n^{0}_k$ be the number of the position at which the series of fixed zeros of the $k$-th group is ended. From the construction of the set $T_{s,l}$ it follows that
  $$
      n^{0}_k = (s+1) \prod_{i=1}^k m_i + \sum\limits_{j=1}^{k-2} \Bigg(\frac{\prod\limits_{i=1}^{k-1} m_i}{m_j} \Bigg).
  $$

Let $n^1_k$ be the number of the position at which the series of fixed ones of the $k$-th group is ended, i.\,e., $n^1_k=n^0_k+\frac{R_k}{m_1}.$ Then
    \begin{eqnarray*}
      N_0(x, n^0_k)=\prod_{i=1}^k m_i+\tau_0(x, n^0_k),
    \end{eqnarray*}
    where $\tau_0(x, n^0_k)$ is the number of zeros among the first non-fixed  digits until the position~$n^0_k$.

It is clear that      $N_0(x, n^0_k)=N_0(x, n^1_k)$ and
    $$\frac{N_0(x, n^0_k)}{n^0_k}=
    \frac{1+\tau_0(x, n^0_k)\Big(\prod\limits_{i=1}^k m_i\Big)^{-1}}{s+1+\frac{1}{m_k}\sum\limits_{i=1}^{k-2}\frac{1}{m_i}};
    $$

      $$ \frac{N_0(x, n^1_k)}{n^1_k}=\frac{1+\tau_0(x, n^0_k)\Big(\prod\limits_{i=1}^k m_i\Big)^{-1}}{s+1+\frac{1}{m_1}+\frac{1}{m_k}\sum\limits_{i=2}^{k-2}\frac{1}{m_i}}.
    $$

 If the limit $\lim\limits_{k\to\infty}\tau_0(x, n^0_k)\Big(\prod\limits_{i=1}^k m_i\Big)^{-1}$ does not exist, then the limit $\lim\limits_{k\to\infty}\frac{N_0(x,\; k)}{k}$ also does not exist.

 If the limit $\lim\limits_{k\to\infty}\tau_0(x, n^0_k)\Big(\prod\limits_{i=1}^k m_i\Big)^{-1}=a(x)$ exists, then
    $$\lim_{k\to\infty}\frac{N_0(x,\; n^0_k)}{n^0_k}=\frac{1+a(x)}{s+1} \quad\text{and}\quad
    \lim_{k\to\infty}\frac{N_0(x,\; n^1_k)}{n^1_k}=\frac{1+a(x)}{s+1+\frac 1{m_1}}.$$

Therefore, for any $x\in T_{s, \; l}$ the digit `$0$' does not have the frequency in the $Q_\infty$-ex\-pansion of~$x.$

Theorem~\ref{theorem on simple sufficient cond for non-faithfulness} shows that the family $\Phi(Q_\infty)$ could be non-faithful for the Hausdorff dimension calculation on the unit interval. Nevertheless we show that for any stochastic vector $Q_\infty$ the family $\Phi$ of all $Q_\infty$-cylinders is admissible for the Hausdorff dimension calculation of  $T_{s,l}$, i.\,e.,
$$\dim_H(T_{s, \; l})=\dim_H(T_{s, \; l}, \Phi).$$

To this end let us consider an arbitrary covering of $T_{s,l}$ by closed intervals $E_j=[a_j, b_j]$. Let  $I_j\!:=T_{s, \; l}\cap E_j.$

  Let $\Delta^j$ be  the cylinder of minimal length among all cylinders containing the set $I_j,$ and let $k_j$ be the rank of  $\Delta^j$.
Then~$(k_j+1)\in \mathrm{Flex}.$

Let $$\Delta^j = \Delta_{\alpha_1\ldots\alpha_{k_j}}=\bigcup_{i=0}^\infty \Delta_{\alpha_1\ldots\alpha_{k_j} i}.$$
Let $c_j\!:=\inf I_j,  d_j\!:=\sup I_j,$ then $c_j\in \Delta_{\alpha_1\ldots\alpha_{k_j} 0}.$
From the construction of the set $T_{s,l}$ it follows that $T_{s, \; l}\cap\Delta_{\alpha_1\ldots\alpha_{k_j} i}=\emptyset,$ $\forall i\geq l$ and $T_{s, \; l}\cap\Delta_{\alpha_1\ldots\alpha_{k_j} (l-1)}\ne\emptyset$. Since $d_j$ is the supremum of $I_j,$ we conclude  that $d_j\in\Delta_{\alpha_1\ldots\alpha_{k_j} (l-1)}.$ Hence, $\Delta_{\alpha_1\ldots\alpha_{k_j} 1}\subset [c_j, d_j],$ and, therefore
$$|(c_j, d_j)|>|\Delta_{\alpha_1\ldots\alpha_{k_j} 1}|=q_1\cdot|\Delta_{\alpha_1\ldots\alpha_{k_j}}|.$$
So, $|\Delta^j|< \frac{1}{q_1}|E_j|$. Thus, for any closed interval $E_j$ we can cover the set $T_{s, \; l}\cap E_j$ by one cylinder of length not larger than~$\frac{1}{q_1}|E_j|.$

Therefore, $$H^\alpha(T_{s, \; l})\leq H^\alpha(T_{s, \; l}, \Phi)\leq \frac{1}{q_1^\alpha}H^\alpha(T_{s, \; l})$$
for any $\alpha\in (0, 1], $ and, hence, $\dim_H(T_{s, \; l}, \Phi)=\dim_H(T_{s, \; l}).$

Finally, let us estimate the Hausdorff dimension of the set $T_{s,l}$. Our  purpose is  to prove that the
 Hausdorff dimension of the set  $T_{s, \; l}$ is close to 1 for large enough $s$ and $l$.
To this end we construct a special singularly continuous probability measure such that the set $T_{s, \; l}$ is the topological support of the measure.

Let $n_k$ be the number of the position at which the $k$-th group is ended.

 The number of non-fixed digits $j\in\mathrm{Flex}$ among the first $n_k$ digits is equal to
 $s\cdot\!\prod\limits_{i=1}^k m_i.$
 The number of a fixed digit $j$ ($j \in \{0, 1, \dots, k-1\}$) among the first $n_k$ digits is equal to~$\frac{\prod\limits_{i=1}^k m_i}{m_j}.$

   Let $\xi(l)$ be a random variable with independent $Q_\infty$-digits $\xi_k(l)$ defined by
  $$\xi(l)=\Delta_{\xi_1(l)\xi_2(l)\ldots\xi_k(l)\ldots},$$
  where $\xi_k(l)$ has the following distributions:
\begin{itemize}
  \item[$-$] if $k\in\mathrm{Fix}(j),$ then

 {\renewcommand{\arraystretch}{1.4}
\hspace{2cm}  \begin{tabular}{c|c}
  $\xi_k(l)$ & $j$            \\ \hline
  $~$      & $p_{jk}=1$ \\
\end{tabular},}
$j\in\mathbb{N}_0;$

\item[$-$] if $k\in\mathrm{Flex},$ then

{\renewcommand{\arraystretch}{1.4}
\hspace{2cm} \begin{tabular}{c|c|c|c|c}
  $\xi_k(l)$ & $0$   & $1$     & $\ldots$ & $l-1$       \\ \hline

  $~$      & $p_{0k}=\frac{q^{}_0}{S_l}$ & $p_{1k}=\frac{q_1}{S_l}$  & $\ldots$ & $p_{(l-1)k}=\frac{q_{l-1}}{S_l}$
\end{tabular},}

where $S_l\!:=\sum\limits_{i=0}^{l-1}q_i.$
\end{itemize}

Let $\mu_{\xi(l)}$ be the above defined probability distribution of the corresponding random variable $\xi(l)$ with independent $Q_\infty$-digits.
It is clear that  $T_{s, \; l}$ is the topological support of the measure $\mu_{\xi(l)}.$
So,
\begin{equation}\label{eq:ineqfordim}
  \dim_H(T_{s, l}) = \dim_H(T_{s, l}, \Phi)\geq \dim_H (\mu_{\xi(l)}, \Phi).
\end{equation}

Fine fractal properties of probability measures with independent $Q_\infty$-sym\-bols were studied in \cite{NT_TVIMS12}. In particular, it has been proven there that under the assumptions
$$\sum\limits_{k=1}^\infty\frac{\sum\limits_{i=0}^\infty p_{ik}\ln^2 p_{ik}}{k^2}<\infty ~~~\mbox{and}~~~ \sum\limits_{k=1}^\infty\frac{\sum\limits_{i=0}^\infty p_{ik}\ln^2 q_{i}}{k^2}<\infty,$$
the Hausdorff dimension of the measure with respect to $\Phi$ can be calculated as follows
  $$\dim_H(\mu_{\xi(l)}, \Phi)=\varliminf\limits_{n\to\infty}\frac{\sum\limits_{k=1}^{n}h_k}{\sum\limits_{k=1}^n b_k},$$
where $$ h_k\!:= -  \sum\limits_{i=0}^\infty p_{ik}\ln p_{ik}, \quad b_k\!:= -  \sum\limits_{i=0}^\infty p_{ik}\ln q_{i}.$$

  In our case  $\sum\limits_{k=1}^\infty\frac{\sum\limits_{i=0}^\infty p_{ik}\ln^2 p_{ik}}{k^2}<\infty,$ because
  $$\sum_{i=0}^\infty p_{ik}\ln^2 p_{ik}=\begin{cases}
    0, &\text{if}\; k\in \mathrm{Fix},\\
    \sum\limits_{i=0}^{l-1}\frac{q_i}{S_l}\ln^2\frac{q_i}{S_l}, &\text{if}\; k\in\mathrm{Flex}.
  \end{cases}
  $$
  Let us show that $\sum\limits_{k=1}^\infty\frac{\sum\limits_{i=0}^\infty p_{ik}\ln^2 q_{i}}{k^2}<\infty$ as well.

Since
  $$\sum_{i=0}^\infty p_{ik}\ln^2 q_{i}=\begin{cases}
    \ln^2 q_j, &\text{if}\; k\in \mathrm{Fix}(j),  ~~ j \in \mathbb{N}_0,\\
    \sum\limits_{i=0}^{l-1}\frac{q_i}{S_l}\ln^2{q_i}, &\text{if}\; k\in\mathrm{Flex},
  \end{cases}
  $$
  it is enough to prove the convergence of the series $\sum\limits_{j=0}^\infty\left(\sum\limits_{i\in\mathrm{Fix}(j)}\frac{\ln^2 q_j}{i^2}\right).$

From the construction of $T_{s,l}$ it follows that $m_j$ is less than the minimal element of the set $\mathrm{Fix}(j).$ Then
$$ \sum\limits_{j=0}^\infty\left(\sum\limits_{i\in\mathrm{Fix}(j)}\frac{\ln^2 q_j}{i^2}\right)<\sum\limits_{j=0}^\infty\left(\sum\limits_{i =m_j+1}^\infty\frac{\ln^2 q_j}{i(i-1)}\right)= \sum_{j=0}^\infty\frac{\ln^2 q_j}{m_j} <\infty.
$$

Since $h_k \leq b_k$, it is not hard to check that $\varliminf\limits_{n\to\infty}\frac{\sum\limits_{i=1}^{n}h_i}{\sum\limits_{i=1}^n b_i} = \varliminf\limits_{k\to\infty}\frac{\sum\limits_{i=1}^{n_k}h_{i}}{\sum\limits_{i=1}^{n_k} b_{i}}.
$

Let $\mathbf{h}_l\!:=\sum\limits_{i=0}^{l-1}\frac{q_i}{S_l}\ln\frac{S_l}{q_i}, \quad \mathbf{b}_l
\!:=\sum\limits_{i=0}^{l-1}\frac{q_i}{S_l}\ln\frac{1}{q_i}.$ Then
$$\sum_{i=1}^{n_k} h_i = s \mathbf{h}_l \prod\limits_{i=1}^{k}m_i \quad\mbox{and} \quad \sum_{i=1}^{n_k} b_i = s \mathbf{b}_l \prod\limits_{i=1}^{k}m_i + \prod\limits_{i=1}^k m_i\Big(\sum\limits_{j=0}^{k-1} \frac{1}{m_j}\ln\frac{1}{q_j}\Big).$$

So,
$$
  \frac{\sum\limits_{i=1}^{n_k} h_i}{\sum\limits_{i=1}^{n_k} b_i}=\frac{s \mathbf{h}_l}{s\mathbf{b}_l+\ln\frac{1}{q_0}+\sum\limits_{i=1}^{k-1}\frac{1}{m_i}\ln\frac{1}{q_i}}= \frac{s \mathbf{h}_l}{s\mathbf{b}_l+\ln\frac{1}{q_0}+\sum\limits_{i=1}^{k-1}\frac{1}{{[\ln^2\frac{1}{q_{i}}]2^i}}\ln\frac{1}{q_i}}.
$$

Let $K := \lim\limits_{k\to\infty}\left(\ln\frac{1}{q_0}+\sum\limits_{i=1}^{k-1}\frac{\ln\frac{1}{q_i}}{[\ln^2\frac{1}{q_{i}}]2^i}\right)<\infty.$
Then $\lim\limits_{k\to\infty}\frac{\sum\limits_{i=1}^{n_k} h_i}{\sum\limits_{i=1}^{n_k} b_i}=\frac{s\mathbf{h}_l}{s\mathbf{b}_l+K}.$

So, taking into account \eqref{eq:ineqfordim}, we get $\dim_H T_{s, l}\geq \frac{s \mathbf{h}_l}{s\mathbf{b}_l+K}.$

Since $S_l\to1 \;(l\to\infty)$,
we get
$$\dim_H (L(Q_\infty))\geq \sup_{s, l}\dim_H T_{s, l}=1,$$ which proves the theorem.
\end{proof}


\bigskip
\textbf{Acknowledgment}

This work was partly supported by  SFB-701 ``Spectral Structures and Topological Methods in Mathematics'' (Bielefeld University), STREVCOM FP-7-IRSES 612669 project and by the Alexander von Humboldt Foundation.
The authors would like to express their gratitude to Prof.~Yuval Peres (Microsoft Research) for fruitful discussions and valuable remarks on problems related to the non-faithfulness of coverings.
\bigskip

\end{document}